\numberwithin{equation}{section}
\newtheorem{theorem}{Theorem}[section]
\newtheorem{lemma}[theorem]{Lemma}
\newtheorem{proposition}[theorem]{Proposition}
\theoremstyle{definition}
\newtheorem{definition}[theorem]{Definition}
\newtheorem*{acknowledgment}{Acknowledgments}
\theoremstyle{remark}
\newtheorem{remark}[theorem]{Remark}
\numberwithin{equation}{section}
\newcommand{\Z}{\mathbb{Z}}
\newcommand{\Ord}{{\rm Ord}}
\newcommand{\HFK}{{\rm HFK}}
\newcommand{\CFK}{{\rm CFK}}
\newcommand{\F}{\mathbb{F}}
\newcommand{\bridge}{{\rm bridge}}
\newcommand{\braid}{{\rm braid}}
\begin{document}

\title{The bridge index and the braid index for twist positive knots}

\author{Keisuke Himeno
}

\address{Graduate School of Advanced Science and Engineering, Hiroshima University,
1-3-1 Kagamiyama, Higashi-hiroshima, 7398526, Japan}
\email{himeno-keisuke@hiroshima-u.ac.jp}

\begin{abstract}
In general, the bridge index of a knot is less than or equal to its braid index. A natural question is when these two values coincide. Motivated by a conjecture of Krishna and Morton, we prove that the bridge index and the braid index coincide for all twist positive knots, using the knot Floer torsion order. Here, a twist positive knot is a knot that admits a positive braid representative containing at least one full twist.
\end{abstract}

\renewcommand{\thefootnote}{}
\footnote{2020 {\it Mathematics Subject Classification.} 57K10, 57K18.

{\it Key words and phrases.} bridge index, braid index, knot Floer homology.}

\maketitle


\section{Introduction}
For a knot $K$ in $S^3$, its \emph{bridge index} $\bridge(K)$ is a geometric invariant introduced by Schubert \cite{Sch54}. It is defined as the minimal number of arcs when $(S^3,K)$ is decomposed into two trivial tangles. For instance, $\bridge(K)=1$ if and only if $K$ is the unknot, and $\bridge(T(p,q))=\min\{|p|,|q|\}$ where $T(p,q)$ is the $(p,q)$--torus knot. However, determining the bridge index is generally a difficult problem.

A related concept is the \emph{braid index} $\braid(K)$, which is the minimal number of strands required in a braid whose closure is isotopic to $K$. By definition, we always have 
\[
\bridge(K)\le \braid(K).
\]
However, the equality does not hold in general. For example, any hyperbolic two-bridge knot $K$ satisfies $\bridge(K)=2$ and $\braid(K)\ge3$.

In \cite{KM25}, Krishna and Morton conjectured that the bridge index and the braid index are equal for two particular classes of knots. One is the class of \emph{positive braid knots}, that is, knots that can be realized as the closure of a positive braid. The other is the class of \emph{$L$--space knots}. An $L$--space knot is a knot that admits a positive Dehn surgery yielding an $L$--space (see \cite{OS05}). In their paper, they investigated the positive braid knots that ``contain at least one full twist'', called \emph{twist positive knots}\textup{:} 

\begin{definition}
A positive $n$--braid $\beta$ is called a \emph{twist positive $n$--braid} if $\beta$ can be written as $\beta=\Delta^2\gamma$, where $\Delta^2$ is the full twist on $n$ strands and $\gamma$ is a positive braid. If a knot $K$ is the closure of such a braid, then $K$ is called a \emph{twist positive knot on $n$ strands}.
\end{definition}

In knot theory, twist positive knots often play important roles in various studies. For example, the braid index of a twist positive knot on $n$ strands equals to $n$ \cite{FW87,Mor86}.

Krishna and Morton confirmed their conjecture for twist positive $L$--space knots by computing the Alexander polynomials and applying the \emph{knot Floer torsion order} introduced in \cite{JMZ20}. As a recent research related to this topic, there is a paper that computes the knot Floer torsion order for any $L$--space cabling of any $L$--space knot \cite{Suc25}.

The purpose of this paper is to prove that this conjecture holds for general twist positive knots\textup{:}

\begin{theorem}\label{thm_main}
If $K$ is a twist positive knot on $n$ strands, then $\bridge(K)=\braid(K)=n$. In particular, if $K$ is a positive twisted torus knot $T(p,q;r,s)$ with $0 < r < p < q$ and $s \ge 0$, then $\bridge(K) = p$.
\end{theorem}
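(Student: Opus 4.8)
The plan is to reduce the theorem to a single inequality about the knot Floer torsion order. Since $\bridge(K)\le\braid(K)$ for every knot, and since the braid index of a twist positive knot on $n$ strands is already known to equal $n$ (Franks--Williams and Morton, as recalled above), it suffices to prove $\bridge(K)\ge n$. For this I would invoke the bound of Juh\'asz--Miller--Zemke \cite{JMZ20}, $\bridge(K)\ge\Ord(K)+1$, where $\Ord(K)$ denotes the $U$--torsion order of $\HFK^-(K)$ (equivalently of $\CFK^-(K)$). Note that combining these two facts already gives $\Ord(K)\le n-1$ for free, so the entire content reduces to proving
\[
\Ord(K)\ \ge\ n-1\qquad\text{for every twist positive knot }K\text{ on }n\text{ strands.}
\]

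To establish this I would use a ``build up from the torus knot'' argument. The model case is $T(n,n+1)=\widehat{\Delta^2\cdot\sigma_1\sigma_2\cdots\sigma_{n-1}}=\widehat{(\sigma_1\cdots\sigma_{n-1})^{n+1}}$, which is an $L$--space knot; from the staircase description of $\CFK^\infty$ of $L$--space knots and the gap structure of the numerical semigroup $\langle n,n+1\rangle$ (equivalently, directly from Krishna--Morton \cite{KM25}), one gets $\Ord(T(n,n+1))=n-1$, which is exactly the largest value compatible with $\bridge=n$. For a general twist positive knot $K=\widehat{\Delta^2\gamma}$ I would then argue that $K$ is obtained from $T(n,n+1)$ without losing torsion. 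Concretely: since $\widehat{\Delta^2\gamma}$ is connected and $\Delta^2$ is central, one may conjugate $\gamma$ so that the fiber surface of $K$ contains the fiber surface of $T(n,n+1)$ and is obtained from it by plumbing finitely many positive Hopf bands — equivalently there is an oriented cobordism from $T(n,n+1)$ to $K$ built entirely out of positive band moves. The Floer-theoretic input is that the associated cobordism map $\CFK^-(T(n,n+1))\to\CFK^-(K)$ is injective on the $U$--torsion submodule (this is the ``positive'' direction, in the spirit of injectivity of cobordism maps for ribbon concordances), so $\HFK^-(K)$ inherits a $U^{n-1}$--torsion class and $\Ord(K)\ge n-1$. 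Since we already have $\Ord(K)\le n-1$, equality holds, and hence $\bridge(K)=\braid(K)=n$. An alternative route that I would keep in reserve is to bypass the cobordism entirely and compute directly: the full twist $(\sigma_1\cdots\sigma_{n-1})^n$ has a highly structured grid/Heegaard presentation, and one can try to exhibit explicit generators in Alexander gradings differing by $n-1$ joined by a differential realizing a $U^{n-1}$--torsion class near the top Alexander grading of any positive braid closure containing a full twist.

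The ``in particular'' statement then follows immediately. For a positive twisted torus knot $T(p,q;r,s)$ with $0<r<p<q$ and $s\ge0$, its standard braid word $(\sigma_1\cdots\sigma_{p-1})^q(\sigma_1\cdots\sigma_{r-1})^s$ satisfies $q>p$, so $(\sigma_1\cdots\sigma_{p-1})^q=\Delta^2\cdot(\sigma_1\cdots\sigma_{p-1})^{q-p}$ contains the full twist $\Delta^2=(\sigma_1\cdots\sigma_{p-1})^p$ on $p$ strands, with $(\sigma_1\cdots\sigma_{p-1})^{q-p}(\sigma_1\cdots\sigma_{r-1})^s$ a positive braid. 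Thus $T(p,q;r,s)$ is a twist positive knot on $p$ strands, and $\bridge(T(p,q;r,s))=p$ by the first part.

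The main obstacle I anticipate is precisely the Floer-theoretic step: controlling $\Ord$ under a positive band cobordism, together with setting up the combinatorial reduction so that one genuinely lands on a cobordism from $T(n,n+1)$ to $K$ rather than to some auxiliary link appearing along the way. A single crossing change can move $\Ord$ up or down by one, and general band moves are not monotone in $\Ord$, so the positivity (and the presence of the full twist) must be used in an essential way — either through an injectivity property of the relevant cobordism maps or through a careful direct analysis of the knot Floer complex of a positive braid closure near its top Alexander grading. Pinning down that monotonicity, or a direct computation that plays its role, is where the real work lies.
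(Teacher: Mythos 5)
Your reduction is the same as the paper's: since $\braid(K)=n$ by Franks--Williams and Morton and $\Ord(K)+1\le\bridge(K)\le\braid(K)$ by Juh\'asz--Miller--Zemke, everything comes down to $\Ord(K)\ge n-1$, and your derivation of the twisted torus knot statement from the first claim is exactly right. The gap is in how you propose to prove $\Ord(K)\ge n-1$. The assertion that the cobordism from $T(n,n+1)$ to $K$ built from positive band moves induces a map $\CFK^-(T(n,n+1))\to\CFK^-(K)$ that is injective on the $U$--torsion submodule is not a known theorem, and it is not a routine extension of existing injectivity results: Zemke-type injectivity is specific to ribbon concordances (genus zero, with the births/saddles organized in a very particular way), whereas your cobordism is genus-increasing, and the cobordism bounds that JMZ actually prove for $\Ord$ go in the direction of \emph{upper} bounds that degrade by one per band or per crossing change --- applied here they give nothing, since the number of bands separating $K$ from $T(n,n+1)$ is unbounded. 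You flag this yourself as "where the real work lies," but that step is the entire content of the theorem, so as written the argument does not close.

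For comparison, the paper avoids cobordism maps altogether and extracts the torsion from the Upsilon invariant. By Feller--Krcatovich (Lemma 3.5 and Proposition 3.7 of \cite{FK17}), a twist positive knot on $n$ strands satisfies $\Upsilon_K(t)=-\tau(K)\,t$ for $0\le t\le 2/n$ and $\Upsilon_K(t)>-\tau(K)\,t$ for $2/n<t\le 1$; that is, the first singularity of $\Upsilon_K$ occurs exactly at $t=2/n$. The paper's Proposition \ref{prop_main} then converts the location of this corner into the bound $\Ord(K)\ge n-1$: using Livingston's description of $\Upsilon$ via the subcomplexes $C_{t,s}$, the corner at $t=2/n$ forces the existence of a nontrivial cycle supported on the line $j=\tau-i(n-1)$ with a vertex at some $i\ge 1$, and forbids any nontrivial cycle strictly below that line; chasing $\partial^2=0$ then shows the relevant vertical differential has length at least $n-1$, producing a $U$--torsion class of order $\ge n-1$ in $\HFK^-(K)$. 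If you want to salvage your approach, you would either need to prove the injectivity-on-torsion statement for positive band cobordisms (interesting but open, as far as the literature cited here goes) or replace it with an argument like the paper's that reads the torsion off a concordance-type invariant whose value on twist positive knots is already known.
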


The twisted torus knot $T(p,q;r,s)$ is defined to be the $(p, q)$--torus knot with $s$ full twists on $r$ adjacent strands where $0 < r < p$. The bridge index of twisted torus knots was studied in \cite{BTZ15}, where it was shown that the bridge index of a twisted torus knot $T(p,q;r,s)$ with $p < q$ is $p$ for sufficiently large $|s|$. In contrast, Theorem \ref{thm_main} removes the assumption on $s$ for positive twisted torus knots.

To prove Theorem \ref{thm_main}, we carry out a detailed study of the knot Floer torsion order $\Ord(K)\in\mathbb{N}\cup\{0\}$. The following proposition is the key to our proof. Let $\tau(K) \in \mathbb{Z}$ and $\Upsilon_K\colon [0,2] \to \mathbb{R}$ denote the tau invariant and the Upsilon invariant of $K$, respectively (see Section \ref{sec_preliminaries}).
 
 \begin{proposition}\label{prop_main}
Suppose that a knot $K$ satisfies the following two conditions for some positive integer $n$\textup{:}
\begin{itemize}
\item[{\rm (1)}] $\Upsilon_K(t)=-\tau(K)\cdot t$ for $0 \le t \le 2/n$, and
\item[{\rm (2)}] $\Upsilon_K(t)>-\tau(K)\cdot t$ for $2/n < t \le 1$.
\end{itemize}
Then $\Ord(K)\ge n-1$.
 \end{proposition}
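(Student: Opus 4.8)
The goal is to extract the lower bound on $\Ord(K)$ from the behaviour of $\Upsilon_K$ near $0$, and the plan is to route everything through the single inequality
\[
\Upsilon_K(t)=-\tau(K)\cdot t\qquad\text{for every }t\in\left[0,\frac{2}{\Ord(K)+1}\right],
\]
valid for an arbitrary knot $K$. Granting this, the proposition follows by contraposition. We may assume $K$ is nontrivial (this is automatic in the application to twist positive knots), so that $\Ord(K)\ge 1$ and the cases $n\le 2$ are immediate; for $n\ge 3$ suppose $\Ord(K)\le n-2$. Then $\tfrac{2}{\Ord(K)+1}\ge\tfrac{2}{n-1}>\tfrac 2n$, and since $\tfrac{2}{n-1}\le 1$ one may pick $t_*\in(\tfrac 2n,\tfrac{2}{n-1}]$; the displayed equality then gives $\Upsilon_K(t_*)=-\tau(K)\cdot t_*$, contradicting hypothesis~(2). (Hypothesis~(1) is not logically needed for this implication, but together with (2) it records the geometric content of the situation, namely that $2/n$ is exactly the first point at which $\Upsilon_K$ stops being linear.) So the whole proof reduces to the displayed inequality, which is the real content and which I will call the key lemma.

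To establish the key lemma I would compute $\Upsilon_K(t)$ from a reduced (vertically and horizontally simplified) model of the full knot Floer complex $\CFK^\infty(K)$, using the filtered description of Livingston --- equivalently, the $t$-modified complex of Ozsv\'{a}th--Stipsicz--Szab\'{o}. Assign to a generator at bifiltration level $(i,j)$ the real weight $\gamma_t=(1-\tfrac t2)\,i+\tfrac t2\,j$; then, in the standard conventions, $\Upsilon_K(t)$ is $-2$ times the minimum, over all cycles $z$ representing the degree-$0$ generator of $HF^\infty(S^3)\cong H_*(\CFK^\infty(K))$, of the largest weight among the terms of $z$. For small $t$ this minimum is attained by the distinguished generator $\xi_0$ computing $\tau(K)$, with $\gamma_t(\xi_0)=\tfrac t2\,\tau(K)$, which gives the slope $-\tau(K)$; $\xi_0$ stops being optimal precisely when some competitor $\xi_0+\partial w$ attains a strictly smaller largest weight. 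Writing the passage from $\xi_0$ to such a competitor as a zigzag of differential arrows with total horizontal displacement $\Delta i\ge 1$ and total vertical displacement $\Delta j\ge 0$, the linearity of $\gamma_t$ forces the crossover to occur at $t=\tfrac{2\,\Delta i}{\Delta i+\Delta j}$; this is already the computation one meets in the three- and five-generator staircases of $T(2,3)$ and $T(3,4)$. The key input is that in any reduced model every vertical arrow has length at most $\Ord(K)$ --- which follows from the $\iota$-symmetry of $\CFK^\infty(K)$, matching vertical with horizontal arrow lengths, together with the way $\Ord(K)$ is read off the $\F[U]$-module $\HFK^-(K)$ --- so that $\Delta j\le\Ord(K)\cdot\Delta i$ along any zigzag (each horizontal step adds at least $1$ to $\Delta i$). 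Hence $t=\tfrac{2\,\Delta i}{\Delta i+\Delta j}\ge\tfrac{2}{\Ord(K)+1}$, no competitor can win earlier, and the key lemma follows; it is sharp, as the torus knots show.

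The step I expect to be the main obstacle is carrying out this competitor analysis for a general reduced complex rather than for a staircase. When $K$ is an $L$-space knot, $\CFK^\infty(K)$ is a single staircase, the zigzags are completely rigid, and the crossover computation is exactly the one Krishna and Morton perform from the Alexander polynomial; for a general twist positive knot one must instead bound, uniformly over all zigzags through the whole (possibly indecomposable) complex --- including those passing through acyclic ``box''-type summands that are invisible to $\Ord(K)$ --- the cumulative vertical displacement by $\Ord(K)$ times the number of horizontal steps. Making the estimate $\Delta j\le\Ord(K)\cdot\Delta i$ rigorous in that generality, with the correct bookkeeping of the directions in which the arrows of a zigzag are traversed, is where the genuine work lies, and it is the point at which one needs the structure of $\HFK^-$ beyond just a formula for the Alexander polynomial.
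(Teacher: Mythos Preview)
Your reduction to the ``key lemma'' is clean, and the contrapositive deduction from it is correct (your observation that hypothesis~(1) then becomes redundant is a pleasant byproduct). But the key lemma is exactly where the content lies, and your sketch of it is not a proof---as you yourself flag. The zigzag heuristic (``pass from $\xi_0$ to a competitor along alternating arrows, with $\Delta j\le\Ord(K)\cdot\Delta i$ because each vertical arrow has length at most $\Ord(K)$'') is valid for a staircase but does not survive to a general reduced complex: the full differential has diagonal components, an arbitrary nontrivial cycle need not be connected to $\xi_0$ by any path of horizontal and vertical arrows, and a bound on individual vertical arrow lengths in a vertically simplified basis says nothing about the \emph{net} vertical displacement of a cycle that may involve many generators in no particular pattern. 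Your aside about boxes is also not quite the obstruction: a genuine box summand is acyclic for $\partial_H$ and contributes nothing to $\HFK^-$, hence nothing to $\Ord(K)$, but it is equally invisible to $\Upsilon_K$. The difficulty is structural, not about extra acyclic pieces.

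The paper sidesteps the competitor analysis entirely by running the argument forward rather than contrapositively, and by working with a single cycle instead of all of them. From the hypotheses a short preliminary lemma extracts one nontrivial cycle $c$ supported on or below the line $j=\tau-(n-1)i$, with at least one term on the line at some $i=m\ge 1$, and simultaneously shows that no nontrivial cycle lies strictly below that line (this is where hypothesis~(1) is actually used). After a filtered base change the rightmost piece of $c$ is a single basis element $x$ at $i=m$; it satisfies $\partial_V x=0$ because $c$ is a cycle and $x$ sits at the maximal $i$--coordinate, but $U^m x$ cannot realize $\tau$, so $x=\partial_V y$ for some $y$. If this vertical arrow had length at most $n-2$, then $\partial y=x+z$ with $z$ a nontrivial cycle which a short chain of filtration inequalities forces strictly below the line $j=\tau-(n-1)i$, contradicting the lemma. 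Hence the arrow has length at least $n-1$, and the $(i,j)$--symmetry of $\CFK^\infty(K)$ converts it into a horizontal arrow of the same length, giving $\Ord(K)\ge n-1$ directly. No global bound on competitors is ever needed: the Upsilon hypotheses hand you one cycle and one long arrow, and that suffices.
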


The invariant $\Ord(K)$ provides a lower bound for the bridge index \cite{JMZ20}\textup{:}
\[
\Ord(K)\le \bridge(K)-1,
\] 
and hence,
\[
\Ord(K)+1\le \bridge(K)\le \braid(K).
\]
Krishna and Morton showed that the bridge index and the braid index are equal for twist positive $L$--space knots by using this inequality. 

We now prove Theorem \ref{thm_main}, assuming Proposition \ref{prop_main}.

\begin{proof}[Proof of Theorem \ref{thm_main}]
The statement is clear for $n=1,2$, so we may assume that $n\ge 3$. 

By Lemma 3.5 and Proposition 3.7 of \cite{FK17}, any twist positive knot on $n$ strands satisfies conditions (1) and (2) in Proposition \ref{prop_main}. (Note that condition (2) was originally stated in terms of the smooth $4$--genus. However, from the statement for $0 \le t \le 2/n$ in Proposition 3.7 of that paper, it follows that the tau invariant coincides with the smooth $4$--genus.) Thus, $\Ord(K)\ge n-1$.
Therefore, we have
\[
n\le \Ord(K)+1\le \bridge(K)\le \braid(K)\le n,
\]
and hence, $\bridge(K)=\braid(K)=n$.
\end{proof}

Moreover, any knot that can be represented as the closure of an $n$--braid of the form $(\sigma_1\sigma_2\cdots\sigma_{n-1})^{nk+1}\alpha$ for $k \ge 1$, where each $\sigma_i$ is a standard braid generator and $\alpha$ is a quasi-positive braid, satisfies the conditions (1) and (2) in Proposition \ref{prop_main} \cite{FK17}. Therefore, the bridge index and the braid index of such a knot are both equal to $n$.

\begin{acknowledgment}
We would like to thank Masakazu Teragaito for his thoughtful guidance and helpful discussions about this work. The author was supported by JST SPRING, Grant Number JPMJSP2132.
\end{acknowledgment}

\section{Preliminaries}\label{sec_preliminaries}
In this section, we review the knot Floer homology theory and the Upsilon invariant.

\subsection{knot Floer homology}
For clarity of notation and conventions, we briefly recall the facts. For further details, see \cite{Hom17, Man16, OS04}.

For a knot $K \subset S^3$, the knot Floer complex $\CFK^{\infty}(K)$ is a finitely generated $\F_2[U, U^{-1}]$--module, where $\F_2$ denotes the field with two elements and $U$ is a formal variable. It is equipped with a differential $\partial$ and a $\Z \oplus \Z$ filtration with respect to the partial order $\le$, defined by $(i,j)\le (k,l)$ if and only if $i\le k$ and $j\le l$. The differential $\partial$ respects the filtration in the sense that the filtration level of $\partial (x)$ is less than or equal to that of $x$.

When $x$ is a base element of $\CFK^{\infty}(K)$ as an $\F_2$--vector space, we write $[x,i,j]$ to denote that $x$ lies in filtration level $(i,j)$. A \emph{filtered base change} is an operation that produces a new filtered element $[x',i,j]=[x,i,j]+[y,k,l]$, where $(k,l)\le (i,j)$.

The action of $U$ on $\CFK^{\infty}(K)$ commutes with the differential $\partial$, decreases the integer-valued homological grading (called \emph{Maslov grading}) by $2$, and lowers the $(i, j)$ filtration level by $(1, 1)$:
\[
U \cdot [x, i, j] = [Ux, i - 1, j - 1].
\]

The homology of $(\CFK^{\infty}(K),\partial)$ is isomorphic to $\mathrm{HF}^{\infty}(S^3)\cong\F_2[U,U^{-1}]$, where the Maslov grading is normalized so that the generator of the homology lies in Maslov grading zero. So, $H_0(\CFK^{\infty}(K))\cong \F_2$, see, for example, \cite{OSS17,OS04A, OS04}.
Furthermore, $\CFK^{\infty}(K)$ admits a symmetry with respect to the $(i, j)$ filtration: the map $(i, j) \mapsto (j, i)$ induces an isomorphism of complexes.

 The vertical differential $\partial_V$ is the $i$--grading preserving restriction of $\partial$. Specifically, if 
\[
\partial([x,i,j]) = \sum_k [y_k, i_k, j_k],
\]
then
\[
\partial_V([x, i, j]) = \sum_{i_k = i} [y_k, i_k, j_k].
\]
Moreover, then we say that the length of $\partial_V ([x, i, j])$ is \[
j-\max\{j_k\mid \text{$[y_k,i_k,j_k]$ with $i_k=i$} \}.
\]
Similarly, the horizontal differential $\partial_H$ denotes the $j$--grading preserving restriction of $\partial$.

The subcomplex $\CFK^-(K)$ is generated over $\F_2$ by base elements $[x,i,j] \in \CFK^{\infty}(K)$ with $i \le 0$. It is also a finitely generated $\F_2[U]$--module. Let $\HFK^-(K)$ denote the homology of $(\CFK^-(K),\partial_H)$. We denote by $\mathrm{Tor}(\HFK^-(K))$ the $U$-torsion submodule of $\HFK^-(K)$. The \emph{torsion order} of a knot $K$ is defined by
\[
\Ord(K) = \min \left\{ k \ge 0 \,\middle|\, U^k \cdot \mathrm{Tor}(\HFK^-(K)) = 0 \right\},
\]
see \cite{JMZ20}.

The complex $\widehat{\CFK}(K)$ is the $\F_2$--submodule of $\CFK^{\infty}(K)$ generated by base elements $[x, 0, j]$. The homology of this complex with respect to the differential $\widehat{\partial}$---defined as the restriction of $\partial$ to maps preserving the $(i, j)$--grading---is denoted by $\widehat{\HFK}(K)$. $\widehat{\HFK}(K)$ is a bigraded by the Maslov grading and the $j$--grading, the latter being called the \emph{Alexander grading}.

The homology of $(\widehat{\CFK}(K),\partial_V)$ is isomorphic to $\widehat{{\rm HF}}(S^3) \cong \F_2$ (by symmetry, the same holds for $\partial_H$). The generator lies in the Maslov grading $0$. Moreover, its Alexander grading is the \emph{tau invariant} $\tau(K) \in \Z$.

\subsection{The Upsilon invariant}

The \emph{Upsilon invariant} $\Upsilon_K\colon [0,2]\to\mathbb{R}$ of a knot $K\subset S^3$ has properties
\begin{itemize}
\item is invariant under knot concordance,
\item $\Upsilon_K$ is continuous and piecewise linear,
\item $\Upsilon_K(0)=\Upsilon_K(2)=0$
\item $\Upsilon_K(t)=\Upsilon_K(2-t)$ for $t\in[0,2]$,
\item for small $t$, $\Upsilon_K(t)=-\tau(K)\cdot t$,
\end{itemize}
see \cite{Liv17,OSS17}.

We now review Livingston's method for computing the Upsilon invariant \cite{Liv17}.  
Let $C_{t,s}$ ($t \in [0,2],\ s \in \mathbb{R}$) be the subcomplex of $\CFK^\infty(K)$ spanned over $\F_2$ by all base elements $[x, i, j]$ satisfying 
\[
j \cdot \frac{t}{2} + i \cdot \left(1 - \frac{t}{2} \right) \le s.
\]
Note that for each $t\in[0,2]$, $\cup_{s\in\mathbb{R}}C_{t,s}=\CFK^{\infty}(K)$.
Then, the Upsilon invariant is given by
\[
\Upsilon_K(t)=\max\left\{-2s \mid \text{$H_0(C_{t,s})\to H_0(\CFK^\infty(K))\cong \F_2$ is surjective} \right\},
\]
where $H_0(C_{t,s})\to H_0(\CFK^\infty(K))$ is the induced homomorphism by the inclusion.

\section{Proof of Proposition \ref{prop_main}}

We now proceed to  prove Proposition \ref{prop_main}.
For simplicity, we use the term \emph{nontrivial cycle} to refer to a cycle in $\CFK^{\infty}(K)$ representing a nontrivial homology class in $H_0(\CFK^{\infty}(K))$. 

\begin{lemma}\label{lem_1}
Suppose that a knot $K$ satisfies the following two conditions for some positive integer $n$\textup{:}
\begin{itemize}
\item[{\rm (1)}] $\Upsilon_K(t)=-\tau(K)\cdot t$ for $0 \le t \le 2/n$, and
\item[{\rm (2)}] $\Upsilon_K(t)>-\tau(K)\cdot t$ for $2/n < t \le 1$.
\end{itemize}
Then the complex $(\CFK^{\infty}(K),\partial)$ contains a nontrivial cycle $c=\sum_k[c_k, i^c_k, j^c_k]$ such that\textup{:}
\begin{itemize}
\item $j^c_k \le \tau-i^c_k(n-1)$ for all $k$, and
\item $j^c_k = \tau-i^c_k(n-1)$ and $i^c_k\ge 1$ for some $k$.
\end{itemize}
Moreover, there is no nontrivial cycle $d=\sum_k[d_k,i^d_k,j^d_k]$ such that $j^d_k < \tau - i^d_k(n-1)$ for all $k$.
\end{lemma}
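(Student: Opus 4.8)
The plan is to translate the hypotheses on $\Upsilon_K$ directly into statements about the filtered complex via Livingston's formula, and to do so we track the line $\ell_\theta\colon j = \tau - i(n-1)$ of slope $-(n-1)$, which is exactly the line of constant $C_{t,s}$-value at the parameter $t = 2/n$. First I would compute: at $t = 2/n$ we have $1 - t/2 = 1 - 1/n = (n-1)/n$ and $t/2 = 1/n$, so the value $j\cdot\tfrac{t}{2} + i\cdot(1-\tfrac{t}{2})$ equals $\tfrac{1}{n}(j + i(n-1))$. Thus $C_{2/n,s}$ is spanned by base elements with $j + i(n-1) \le ns$, and the "cheapest" value of $ns$ at which $H_0(C_{2/n,s}) \to H_0(\CFK^\infty(K))$ becomes surjective is $-\tfrac{n}{2}\Upsilon_K(2/n)$. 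Condition (1) pins this down: since $\Upsilon_K(2/n) = -\tau\cdot 2/n$, the threshold is $ns = \tau$, i.e. the minimal supporting "diagonal value" of a nontrivial cycle is exactly $\tau$, meaning there is a nontrivial cycle $c$ with all base elements satisfying $j^c_k + i^c_k(n-1) \le \tau$ (equivalently $j^c_k \le \tau - i^c_k(n-1)$), and no nontrivial cycle lies strictly below the line. This already gives the first bullet and the "Moreover" sentence, the latter being essentially a restatement that the threshold cannot be lowered.

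The remaining point — that some base element of $c$ actually lies \emph{on} the line with $i^c_k \ge 1$ — is where condition (2) enters, and I expect this to be the main obstacle. The idea: suppose for contradiction that every nontrivial cycle $c$ achieving the threshold $\tau$ either has all its on-line elements with $i^c_k \le 0$, or (after filtered base changes) can be pushed strictly below the line on the relevant portion. I would argue that in that case the cycle survives into $C_{t,s}$ for some $s$ with $ns < \tau$ once $t$ is pushed slightly past $2/n$ — more precisely, for $t$ slightly larger than $2/n$ the line of constant value tilts, and an element on the old line with $i \ge 1$ (hence $j \le \tau - i(n-1) < \tau$) would have its $C_{t,s}$-value \emph{strictly decrease} relative to an element with $i \le 0$; conversely, if the extremal cycle only touches the line at points with $i \le 0$, then perturbing $t$ upward does not force the threshold $-\tfrac{2}{t}\cdot(\text{value})$ to exceed $-\tau t$, so $\Upsilon_K(t) = -\tau t$ would persist past $2/n$, contradicting (2). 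Making this precise requires care about how $H_0(C_{t,s})$ varies with $t$ and $s$ near the corner, and about invariance under filtered base change: one must show that the existence of an on-line element with $i \ge 1$ is not an artifact that can be removed by a change of basis — this is the delicate part, and I would handle it by choosing $c$ to be, among all threshold-achieving nontrivial cycles, one whose "profile along the line" (the multiset of $i$-coordinates of its on-line base elements) is extremal in a suitable sense, and then showing any attempt to cancel the largest-$i$ on-line term forces either a violation of the cycle condition or the appearance of a new on-line term with at least as large an $i$-coordinate.

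Concretely, the steps I would carry out are: (i) record the $t = 2/n$ specialization of Livingston's filtration and identify the line $\ell_\theta$; (ii) from condition (1), extract a nontrivial cycle $c$ supported on $\{j \le \tau - i(n-1)\}$ and note that, since $H_0 \cong \F_2$ is nonzero, $c$ cannot be a boundary, so at least one of its base elements is nonzero — then refine $c$ by filtered base changes so that its support is "minimal" with respect to the diagonal functional $j + i(n-1)$; (iii) from condition (2) and the piecewise-linearity of $\Upsilon_K$, deduce that for every $t > 2/n$ the threshold strictly exceeds $-\tau t/2$ (after the $-2s$ normalization), which forces $c$ to have at least one base element that is \emph{not} strictly below $\ell_\theta$ when $t$ is perturbed — i.e., one on $\ell_\theta$; (iv) argue the on-line element can be taken with $i \ge 1$ using the $(i,j)\leftrightarrow(j,i)$ symmetry of $\CFK^\infty(K)$ together with the Maslov-grading-zero constraint $H_0$, ruling out the case where all on-line elements have $i \le 0$ (such a configuration would be symmetric to one with $j \le 0$, and combining the two constraints near the corner would again contradict (2)); (v) assemble (ii)–(iv) into the two bullet points, with the "Moreover" clause coming directly from the minimality in (ii). The main obstacle, as noted, is step (iii)–(iv): cleanly deducing the \emph{strict} position $i^c_k \ge 1$ of an on-line element from the \emph{strict} inequality in condition (2), which demands a careful analysis of how surjectivity of $H_0(C_{t,s}) \to H_0(\CFK^\infty(K))$ behaves as $(t,s)$ crosses the corner point $(2/n, \tau/n)$.
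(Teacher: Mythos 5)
Your overall framework is the paper's: specialize Livingston's formula at $t=2/n$, where $C_{2/n,s}$ is spanned by the $[x,i,j]$ with $j+i(n-1)\le ns$, read condition (1) as saying that the surjectivity threshold is exactly $s=\tau/n$, and deduce both the first bullet (a nontrivial cycle supported on $\{j\le\tau-i(n-1)\}$) and the ``Moreover'' clause (a nontrivial cycle strictly below the line would give $\Upsilon_K(2/n)\ge-2s_d>-2\tau/n$, contradicting (1)). That part is correct and coincides with the paper's proof.

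The gap is in the second bullet, which you yourself flag as unresolved, and for which your concrete plan (step (iv)) takes a wrong turn. The $(i,j)\mapsto(j,i)$ symmetry does not help: it carries the line $j=\tau-i(n-1)$ to the line of slope $-1/(n-1)$, which is the one relevant to the parameter $2-2/n$, so at best it reproduces the mirror statement and cannot rule out on-line elements with $i\le 0$; the Maslov grading plays no role here either. The worry about filtered base changes is also a red herring, since the lemma only asserts the existence of \emph{some} cycle with the stated property, and nothing needs to be preserved under a change of basis. What actually closes the argument is the perturbation computation you gesture at in your second paragraph but never write down: for a base element on the line, i.e.\ at $(i,\tau-i(n-1))$, the Livingston functional at parameter $t$ is
\[
\bigl(\tau-i(n-1)\bigr)\cdot\tfrac{t}{2}+i\cdot\bigl(1-\tfrac{t}{2}\bigr)=\tfrac{\tau t}{2}+i\bigl(1-\tfrac{nt}{2}\bigr),
\]
and since $1-\tfrac{nt}{2}<0$ for $t>2/n$, this is $<\tfrac{\tau t}{2}$ if and only if $i\ge 1$. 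So argue in the direction of condition (2) rather than via symmetry: by piecewise linearity of $\Upsilon_K$ (and finite generation of $\CFK^{\infty}(K)$) a single nontrivial cycle $e$ realizes the maximum in Livingston's formula on some interval $(2/n,2/n+\epsilon)$; condition (2) gives that all its base elements have functional value $<\tfrac{\tau t}{2}$ there, hence $\le\tau/n$ at $t=2/n$ by continuity, so $e$ satisfies the first bullet; the ``Moreover'' clause forces some base element of $e$ onto the line; and the displayed identity forces that element to have $i\ge 1$. This is exactly the content of the paper's one-sentence claim that otherwise $\Upsilon_K(t)=-\tau t$ would persist past $2/n$; with it, your outline becomes a complete proof and step (iv) can be discarded.
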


\begin{proof}
We first show that no nontrivial cycle $d=\sum_k[d_k,i^d_k,j^d_k]$ exists with $j^d_k < \tau - i^d_k(n-1)$ for all $k$. Suppose, for contradiction, that such a cycle $d$ exists. Consider the parameter $t=2/n$. Then the subcomplex $C_{2/n,s}$ is spanned by elements $[x,i,j]$ satisfying $j\le ns-i(n-1)$.
Then $d\in C_{2/n,s_d}$ for some $s_d<\tau/n$, see Figure \ref{lem_fig_1}. Hence, $\Upsilon_K(2/n)\ge -2s_d > -2\tau/n$, which contradicts to condition (1).

\begin{figure}
\centering
\includegraphics[scale=0.1]{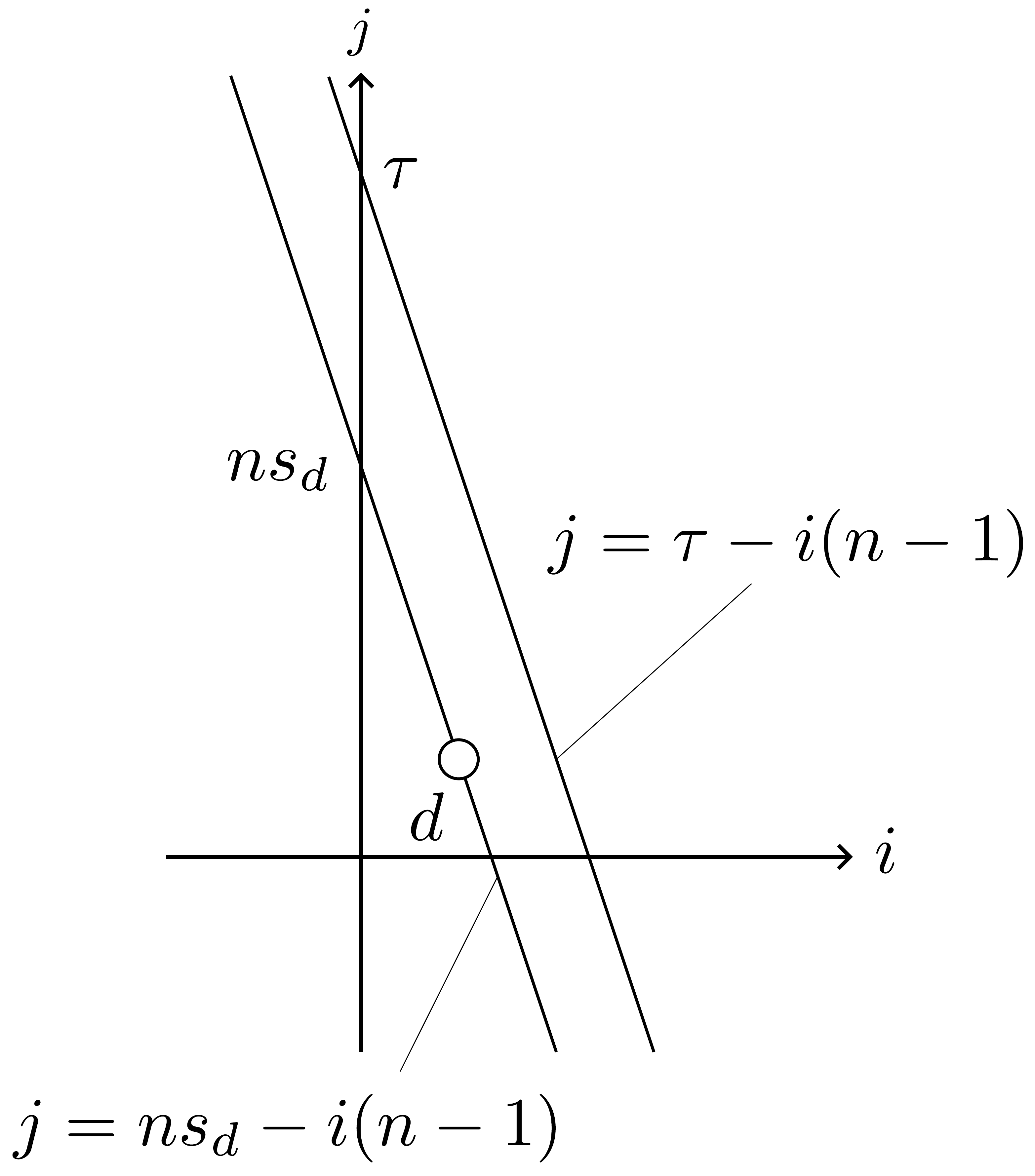}
\caption{The $(i,j)$--plane. If such a cycle $d$ exists, it must be supported strictly below the line $j=\tau-i(n-1)$, meaning that $d\in C_{2/n,s_d}$ for some $s_d<\tau/n$.}
\label{lem_fig_1}
\end{figure}

Finally, suppose there is no nontrivial cycle $c$ as described. Then $\Upsilon_K(t) = -\tau t$ would still hold for values of $t$ slightly greater than $2/n$, contradicting conditions (1) and (2). This completes the proof.
\end{proof}

 \begin{proposition}[Proposition \ref{prop_main} again]\label{prop_again}
Suppose that a knot $K$ satisfies the following two conditions for some positive integer $n$\textup{:}
\begin{itemize}
\item[{\rm (1)}] $\Upsilon_K(t)=-\tau(K)\cdot t$ for $0 \le t \le 2/n$, and
\item[{\rm (2)}] $\Upsilon_K(t)>-\tau(K)\cdot t$ for $2/n < t \le 1$.
\end{itemize}
Then $\Ord(K)\ge n-1$.
 \end{proposition}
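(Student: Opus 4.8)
The plan is to argue by contradiction: assume $\Ord(K)\le n-2$ and derive a contradiction with Lemma \ref{lem_1}.

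\emph{Step 1 (reformulating the hypothesis).} Fix a basis of $\CFK^{\infty}(K)$ that is reduced (no component of $\partial$ preserves both gradings) and vertically simplified, so that $\partial=\partial_V+\partial_H+\partial_D$, where $\partial_D$ strictly decreases both gradings, and the $\partial_V$-arrows match the basis elements in pairs with one element $v_\tau$ (the $\tau$-generator) left unmatched. A standard computation — using the symmetry $(i,j)\mapsto(j,i)$ to identify $\HFK^-(K)$ with the homology of the truncation $\{[x,i,j]:j\le 0\}$ under $\partial_V$, then splitting this complex along the $\partial_V$-matching — gives $\HFK^-(K)\cong\F_2[U]\oplus\bigoplus_\alpha \F_2[U]/U^{\ell(\alpha)}$, where $\alpha$ ranges over the $\partial_V$-arrows and $\ell(\alpha)$ is the length of $\alpha$ (cf. \cite{JMZ20}). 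Hence $\Ord(K)=\max_\alpha\ell(\alpha)$, so the hypothesis means that \emph{every $\partial_V$-arrow has length $\le n-2$}.

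\emph{Step 2 (a good cycle).} Following the proof of Lemma \ref{lem_1} but taking the cycle to realize the value of $\Upsilon_K$ at a parameter $t$ slightly larger than $2/n$ (where $\Upsilon_K(t)>-\tau t$), one obtains a nontrivial cycle $c=\sum_k[c_k,i^c_k,j^c_k]$ all of whose terms lie weakly below the line $L_n:\,j=\tau-i(n-1)$, having at least one term on $L_n$, and — crucially — having no term on $L_n$ with $i$-coordinate $\le 0$; in particular $v_\tau$ is not a term of $c$. By Lemma \ref{lem_1} there is, moreover, no nontrivial cycle lying strictly below $L_n$.

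\emph{Step 3 (stripping $L_n$).} With the basis of Step 1 and the cycle $c$ of Step 2 (so all $\partial_V$-arrows have length $\le n-2$), I would prove two facts. \textbf{(i)} No term of $c$ on $L_n$ is the source of a $\partial_V$-arrow: if $\xi$ at $(i_0,j_0)\in L_n$ were such a source, with $\partial_V$-arrow of length $\ell'\le n-2$, then $\partial_V\xi$ sits at $(i_0,j_0-\ell')$, strictly below $L_n$, and is the unique term of $\partial\xi$ with $i$-coordinate $i_0$; since $\partial c=0$ and $\partial_V\xi$ cannot be the target of a second $\partial_V$-arrow, some other term $\eta$ of $c$ must admit a $\partial_H$- or $\partial_D$-arrow to $\partial_V\xi$, forcing $i(\eta)>i_0$ and — since $\ell'\le n-2$ while $L_n$ has slope $-(n-1)$ — forcing $j(\eta)+i(\eta)(n-1)>\tau$, i.e. $\eta$ strictly above $L_n$, contradicting Step 2. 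Combined with Step 2 (no term on $L_n$ is $v_\tau$), this shows every term of $c$ on $L_n$ is the \emph{target} of a $\partial_V$-arrow. \textbf{(ii)} If $\xi=U^m x$ is a term of $c$ on $L_n$ at $(i_0,j_0)$ and $\partial_V w=x$ has length $\ell\le n-2$, then the homologous cycle $c+\partial(U^m w)$ has $\xi$ deleted and introduces only terms lying on the horizontal line $j=j_0+\ell$ or below, at $i$-coordinates $<i_0$; as $\ell\le n-2<(i_0-i')(n-1)$ for every $i'<i_0$, these are all strictly below $L_n$. Iterating (ii) — a cycle of fixed Maslov grading has only finitely many terms on $L_n$, and each step removes one and creates none — produces a nontrivial cycle homologous to $c$ lying strictly below $L_n$, contradicting Step 2.

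\emph{Expected obstacle.} The two points demanding the most care are Step 1, namely the identity $\Ord(K)=\max_\alpha\ell(\alpha)$ (which rests on the existence of a basis that is simultaneously reduced and vertically simplified and on the precise identification of $\HFK^-(K)$ with a truncated homology), and the strengthening of Lemma \ref{lem_1} used in Step 2 — that $c$ may be chosen with no term on $L_n$ at $i$-coordinate $\le 0$. Without the latter, the procedure in Step 3 could stall at the $\tau$-generator $v_\tau$, which is not the target of any $\partial_V$-arrow and so cannot be removed by adding a boundary. Keeping the diagonal part $\partial_D$ under control throughout Step 3 is a further, more routine, technicality.
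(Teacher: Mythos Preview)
Your proposal and the paper's proof share the same endgame --- producing a nontrivial cycle strictly below the line $L_n:\,j=\tau-i(n-1)$ to contradict Lemma~\ref{lem_1} --- but the paper reaches it in one stroke where you set up an iteration. The paper takes the cycle $c$ from Lemma~\ref{lem_1} as is, looks only at its rightmost column $i=m$ (noting $m\ge 1$ since $c$ has an $L_n$-term with $i\ge 1$), collapses that column to a single basis element $x$ by a filtered base change, observes that $x$ is a $\partial_V$-boundary (the inequality $j^x\le\tau-m(n-1)<\tau+m$ for $m\ge 1$ rules out the $\tau$-generator in that column), and then shows directly that any $y$ with $\partial_V y=x$ must have length $\ge n-1$: otherwise $\partial y=x+z$ produces a forbidden nontrivial cycle strictly below $L_n$. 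One long vertical arrow is exhibited, and $\Ord(K)\ge n-1$ follows by the $(i,j)$-symmetry. No vertically simplified basis, no identity $\Ord(K)=\max_\alpha\ell(\alpha)$, no strengthening of Lemma~\ref{lem_1}, and no stripping iteration are invoked.

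The place where your route genuinely costs more is Step~2. Your suggested justification --- take the cycle realizing $\Upsilon_K(t)$ for $t$ slightly above $2/n$ --- does yield a cycle with no $L_n$-term at $i\le 0$ (the bounding line of $C_{t,s}$ lies strictly below $L_n$ there), but that same bounding line, having slope $-(2-t)/t>-(n-1)$, rises \emph{above} $L_n$ for large $i$, so this cycle is not a priori weakly below $L_n$ as you claim. The repair is one more sentence: by continuity of $\Upsilon_K$ at $2/n$, this same pivot cycle also realizes $\Upsilon_K(2/n)=-2\tau/n$, hence lies in $C_{2/n,\tau/n}$, i.e.\ weakly below $L_n$. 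With that added, your argument goes through. The paper's route is shorter exactly because focusing on the rightmost column, where $m\ge 1$ is guaranteed, sidesteps the $v_\tau$ obstruction that your Step~2 strengthening is designed to remove, and thereby also eliminates the need for Steps~1 and~3(ii).
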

\begin{proof}
For simplicity, let $\tau=\tau(K)$. 

By Lemma \ref{lem_1}, there is a nontrivial cycle $c=\sum_{k=1}^l[c_k, i^c_k, j^c_k]$ such that
\begin{itemize}
\item $j^c_k \le \tau-i^c_k(n-1)$ for all $k$, and
\item $j^c_k = \tau-i^c_k(n-1)$ and $i^c_k\ge 1$ for some $k$.
\end{itemize}
Let $m=\max\{i^c_k\mid k=1,\ldots,l\}$, 
and let $x=\sum_{i^c_k=m}c_k$. By applying a filtered base change, we may suppose that $x$ is a base element $[x,m,j^x]$.

Note that $\partial_V (x)=0$, since $c$ is a cycle and $\partial$ respects the filtration level. Moreover, because that $U^mx$ cannot realize $\tau$, $x$ is a boundary cycle for $\partial_V$. Hence, there is a base element $[y,m,j^y]$ such that $\partial_V (y)=x$. 

Assume that the length of $\partial_V (y)$ is less than $n-1$, that is, 
\[
j^y-j^x\le n-2. 
\]
Since $\partial^2 (y)=0$ and $c$ is a nontrivial cycle, $\partial (y)=x+z$ for some nontrivial cycle $z=\sum_k[z_k,i^z_k,j^z_k]$. 

Now, for any $k$, we have
\begin{align*}
j^z_k &\le j^y \\
      &\le j^x + (n - 2) \\
      &\le \tau - m(n - 1) + (n - 2) \\
      &= \tau - (m - 1)(n - 1) - 1 \\
      &\le \tau - i^z_k(n - 1) - 1 \\
      &< \tau - i^z_k(n - 1).
\end{align*}
Here, the first inequality follows from the fact that the differential $\partial$ respects the filtration level.  
The second inequality comes from the length of $\partial_V(y)$ being at most $n-2$.  
The third follows from our choice of $x$.  
The fourth inequality uses that $\partial_V(y) = x$.
This contradicts Lemma~\ref{lem_1}, which asserts that there is no such nontrivial cycle $z$.  
Therefore, the length of $\partial_V(y)$ must be at least $n - 1$ (see also Figure \ref{fig_1}).

\begin{figure}
\centering
\includegraphics[scale=0.1]{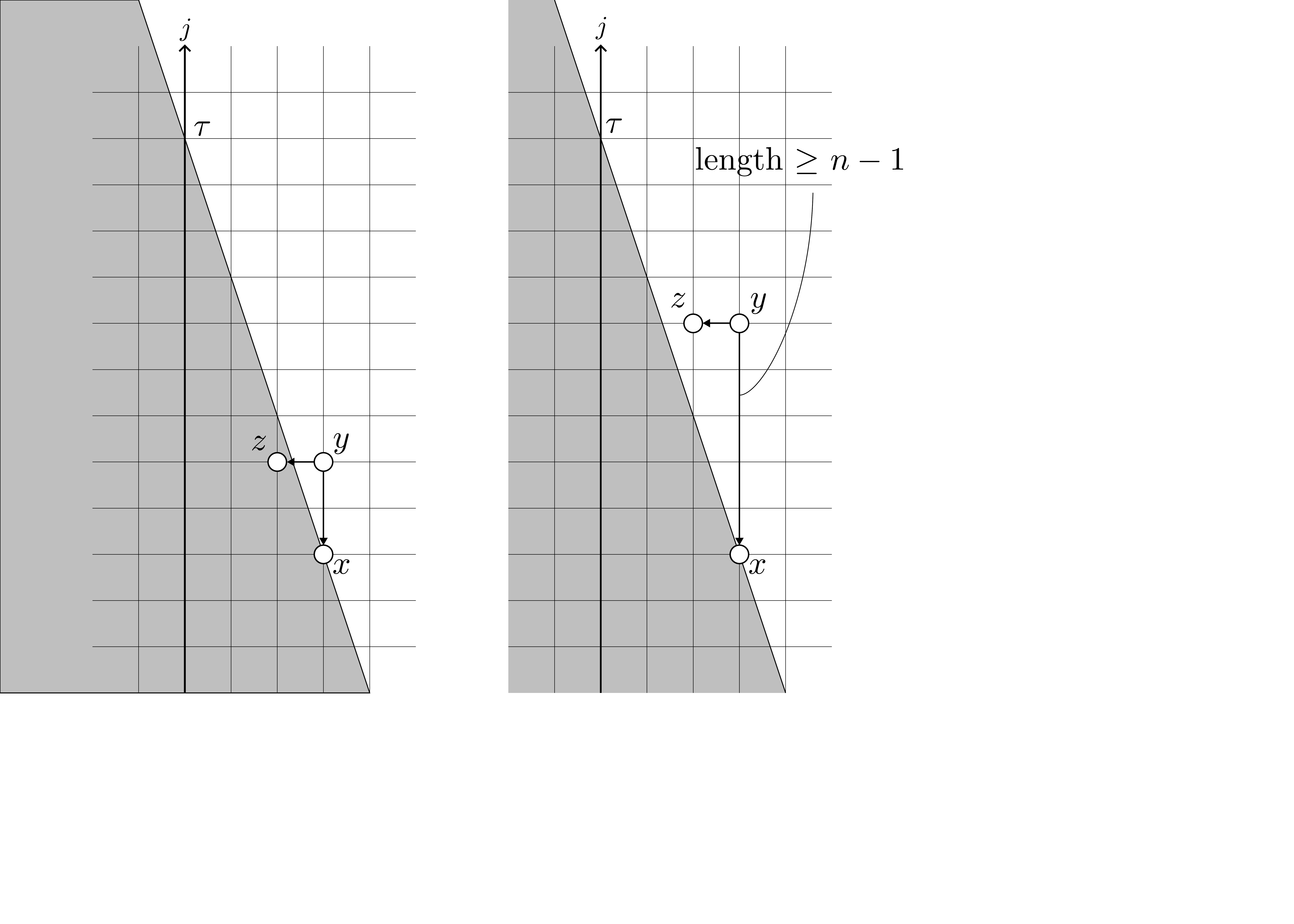}
\caption{Schematic illustration in the proof of Proposition \ref{prop_again}. By Lemma \ref{lem_1}, there is no nontrivial cycle contained in the interior of the gray region. The arrows represent $\partial$. (Left) When the length of $\partial_V(y)$ is less than $n-1$ (here $n-1=3$). Then, $z$ must be in the interior of the gray region, so this does not occur. (Right) This is a possible situation. In this case, the length of $\partial_V (y)$ is at least $n-1$.}
\label{fig_1}
\end{figure}

By the symmetry of $\CFK^{\infty}(K)$ with respect to the $(i,j)$--coordinates, there exist elements $[x',j^x,m]$ and $y'$ in $\CFK^{\infty}(K)$ such that $\partial_H(y') = x'$, and the length of $\partial_H(y')$ is at least $n - 1$. 
This implies that the element $[U^{j^x} x'] \in \mathrm{Tor}(\HFK^-(K))$ has an order of at least $n - 1$. Therefore, we have $\Ord(K)\ge n-1$.
\end{proof}

\begin{remark}
Although all possible cases are considered in the proofs of Lemma \ref{lem_1} and Proposition \ref{prop_main}, we believe that for twist positive knot, only the ``simplest" case in which $\partial b=a+c$, where $[a,0,\tau(K)]$ and $[c,1,\tau(K)-(n-1)]$ are nontrivial cycles and $[b,1,\tau(K)]$ is a base element, actually occurs.
\end{remark}


\end{document}